\newtheorem{theorem}{Theorem}[section]
\newtheorem{proposition}[theorem]{Proposition}
\theoremstyle{definition}
\newtheorem{remark}[theorem]{Remark}
\numberwithin{equation}{section}
\begin{document}

\title{On holomorphic functions on negatively curved  manifolds}

\author{Marijan Markovi\'{c}}

\begin{abstract}
Based on a well known     Sh.-T. Yau  theorem we obtain that the real part of a holomorphic function on a K\"{a}hler manifold with the Ricci curvature
bounded from below by $-1$ is contractive with respect to the distance on the manifold and the hyperbolic distance on $(-1,1)$ inhered from the domain
$(-1,1)\times\mathbb{R}$. Moreover, in the case of bounded holomorphic functions we prove that the modulus is contractive with respect to the distance
on the manifold and the hyperbolic distance  on the unit disk.
\end{abstract}

\address{Faculty of Sciences and Mathematics\endgraf
University of Montenegro\endgraf
D\v{z}ord\v{z}a Va\v{s}ingtona bb\endgraf
81000 Podgorica\endgraf
Montenegro\endgraf}

\email{marijanmmarkovic@gmail.com}

\subjclass[2010]{Primary 32Q05, 32Q15; Secondary 31C05, 30A10}

\keywords{holomorphic mappings on complex manifolds, modulus and the real part of a holomorphic function, hyperbolic distance,      Bergman distance,
negatively curved manifolds, Ricci curvature}

\maketitle

\section{Introduction}

\subsection{The Yau generalization of the Schwarz--Pick lemma}

The result concerning holomorphic mappings on K\"{a}her manifolds given in the following proposition is well known and  proved by Yau \cite{YAU.AJM},
in 1978. We refer also to the work of Royden \cite{ROYDEN} for certain improvements of this result and the Kobayashi book \cite{KOBAYASHI.BOOK} as a
general reference for holomorphic mappings between complex manifolds where the questions on distance, area  and volume decreasing  properties    are
considered.

\begin{proposition}[Cf. \cite{YAU.AJM}]\label{PROP.YAU}
Let     $M$ be a complete K\"{a}hler manifold with the Ricci curvature bounded from below by a negative constant $K_M$. Let $N$ be another Hermitian
manifold with  holomorphic sectional curvature bounded from above by a negative constant $K_N$. Then any holomorphic mapping from $M$ into  $N$ does
not increase distances  more  than a factor depending only on the curvatures of $M$ and $N$. The  factor is  $ {K_M}/{K_N}$.
\end{proposition}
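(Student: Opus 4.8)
The plan is to establish the sharper, metric form of the statement — for any holomorphic $f\colon M\to N$,
\begin{equation*}
 f^{*}ds_{N}^{2}\;\le\;\frac{K_{M}}{K_{N}}\,ds_{M}^{2}\qquad\text{on all of }M,
\end{equation*}
from which the assertion follows by integrating $\sqrt{f^{*}ds_{N}^{2}}\le\sqrt{K_{M}/K_{N}}\,\sqrt{ds_{M}^{2}}$ along curves to get $d_{N}(f(p),f(q))\le\sqrt{K_{M}/K_{N}}\,d_{M}(p,q)$. If $df\equiv0$ this is trivial, so I would set $\Omega=\{x\in M:df_{x}\ne0\}$, an open nonempty set, and on $\Omega$ introduce the energy density $u$ of $f$, i.e.\ the largest eigenvalue of the nonnegative Hermitian form $f^{*}ds_{N}^{2}$ relative to $ds_{M}^{2}$ — when $\dim_{\mathbb C}N=1$, which is all that is needed for the applications in this paper, $u$ is simply the conformal factor comparing the two metrics on $\Omega$. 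Then $u$ is positive (and, in the rank-one case, smooth) on $\Omega$, one has $f^{*}ds_{N}^{2}\le u\,ds_{M}^{2}$ pointwise, and it suffices to prove $u\le K_{M}/K_{N}$ on $\Omega$.

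Next I would run the Bochner / Chern--Lu computation for holomorphic maps of a K\"ahler manifold into a Hermitian one: differentiating $\log u$ twice, using the K\"ahler identities on $M$ together with holomorphicity of $f$, and discarding a manifestly nonnegative combination of the second-derivative terms by a Cauchy--Schwarz (Kato-type) inequality, one is led to
\begin{equation*}
 \Delta_{M}\log u\;\ge\;2K_{M}-2K_{N}\,u\qquad\text{on }\Omega,
\end{equation*}
with $\Delta_{M}$ the Laplace--Beltrami operator of $(M,ds_{M}^{2})$; here the geometry of $M$ enters only through the Ricci lower bound $\operatorname{Ric}_{M}\ge K_{M}\,ds_{M}^{2}$ and the geometry of $N$ only through the upper bound $K_{N}$ on its holomorphic sectional curvature. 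For $\dim_{\mathbb C}N=1$ this is exactly the classical computation behind the Ahlfors--Schwarz lemma; for higher-dimensional $N$, isolating from the $N$-curvature term only the holomorphic sectional curvature (and not the bisectional curvature) is the content of Royden's refinement \cite{ROYDEN}.

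The last step would use completeness: since $M$ is complete with Ricci curvature bounded below, the Omori--Yau generalized maximum principle applies to $\log u$ on $\Omega$ — run in the standard way when $u$ is not known to be bounded, by truncating to geodesic balls with a cutoff whose Laplacian is controlled by the Laplacian comparison theorem, evaluating the differential inequality at the maximum of the truncated function, and letting the radius tend to infinity — and it produces a sequence $x_{k}\in\Omega$ with $u(x_{k})\to\sup_{\Omega}u$ and $\limsup_{k}\Delta_{M}\log u(x_{k})\le0$. If $\sup_{\Omega}u>K_{M}/K_{N}$ (the value $+\infty$ not excluded), then, since $K_{N}<0$, along this sequence $2K_{M}-2K_{N}u(x_{k})$ tends to a strictly positive limit or to $+\infty$, which contradicts $\limsup_{k}\Delta_{M}\log u(x_{k})\le0$ through the inequality of the previous step. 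Hence $u\le K_{M}/K_{N}$ on $\Omega$, and the proposition follows.

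I expect the genuine obstacle to be precisely this last step. On a noncompact $M$ there is no interior maximum of $u$ to substitute into the differential inequality, and $u$ need not be bounded a priori, so the argument really depends on the generalized maximum principle — and, through it, on both hypotheses imposed on $M$ (completeness and the lower Ricci bound); this is where Yau's theorem goes beyond the classical Ahlfors--Schwarz lemma. The Bochner computation of the middle step is routine but calls for care in organizing the curvature terms, and, for higher-dimensional targets, in choosing $u$ (and handling the possible non-smoothness of the top eigenvalue) so that only the holomorphic sectional curvature of $N$ survives.
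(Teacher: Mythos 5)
The paper itself contains no proof of Proposition \ref{PROP.YAU}: it is quoted (``Cf.''~\cite{YAU.AJM}) from Yau, with the holomorphic-sectional-curvature hypothesis on the target resting on Royden's refinement \cite{ROYDEN}. Your sketch reconstructs precisely that argument --- the Chern--Lu/Bochner differential inequality for a density $u$, followed by the Omori--Yau generalized maximum principle, which is indeed where completeness and the lower Ricci bound on $M$ enter --- so it follows the same route as the cited sources rather than a different one, and as an outline it is sound.

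Two caveats are worth recording. First, the inequality $\Delta_M\log u\ge 2K_M-2K_N\,u$ with these exact constants is what the computation gives when the curvature term of $N$ can be estimated by the holomorphic sectional curvature alone, i.e.\ for rank-one differentials (in particular one-dimensional targets, which is all this paper uses); for a higher-dimensional target under only a sectional bound, Royden's lemma yields the constant $\frac{2\nu}{\nu+1}\,\frac{K_M}{K_N}$, where $\nu$ is the maximal rank of $df$, so the clean factor $K_M/K_N$ in the statement is itself a slight simplification. Second, the conclusion $f^{*}ds_N^{2}\le \frac{K_M}{K_N}\,ds_M^{2}$ gives the factor $K_M/K_N$ for the metrics (quadratic forms) and hence $\sqrt{K_M/K_N}$ for distances, which is how the ``factor'' in the statement should be read; neither point affects the applications in this paper, where $K_M=K_N=-1$ and the targets are one-dimensional. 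Note also that Yau and Royden work with the trace (energy density) of $f^{*}ds_N^{2}$ rather than its top eigenvalue, which sidesteps the non-smoothness issue you flag.
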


In the rest of the paper we denote by $M$ the kind of the Hermitian manifold as in the above proposition. On the other hand, the manifold $N$ may be,
for example, a bounded homogenous domain in $\mathbb {C}^n$       equipped with the Bergman metric, since a such one domain has constant holomorphic
sectional curvature equal to $-1$  \cite{BERGMAN}; in particular it may be the unit ball in $\mathbb{C}^n$.

From Proposition \ref{PROP.YAU} it follows that if the Ricci curvature of $M$ is bounded from below by $-1$, and if the holomorphic section curvature
of $N$ is bounded from  above by $-1$,  then any  holomorphic mapping $f:M\to N$ is distance decreasing, i.e.,
\begin{equation*}
d_N (f(z),f(w))\le d_M (z,w),\quad z,w\in M,
\end{equation*}
where $d_M$ is the distance on $M$, and $d_N$ is the distance on $N$.

Recall that the distance on a Hermitan manifold $N$ is given by
\begin{equation*}
d_N (z,w)= \inf_\gamma \ell (\gamma),
\end{equation*}
where $\gamma:[a,b]\to N$ is among partially $\mathcal {C}^1$-smooth paths on $N$ with endpoints at $z$ and $w$, i.e., $\gamma(a)=z$ and $\gamma(b)=w$.
We have denoted by $\ell (\gamma)$  the length  of the path $\gamma$ with respect to the Hermitian form $H_z = H^N_z$, $z\in N$ on $N$  which is
\begin{equation*}
\ell (\gamma) =   \int_a^b \sqrt{ H_ { \gamma(t) }  ( \gamma'(t),\gamma'(t)  ) } dt.
\end{equation*}
Let us note that if  $\gamma:[a,b]\to M$ is a partially  $\mathcal{C}^1$-smooth path parameterized by arc length, then  since
\begin{equation*}
 \ell (\gamma|_{[a,t]})=\int_a^t  \sqrt{ H_ { \gamma(t) }  (\gamma'(t),\gamma'(t) )  }dt =t-a,\quad t\in[a,b],
\end{equation*}
we have  $H_{\gamma (t)} (\gamma' (t), \gamma' (t))=1$, $t  \in [a,b]$.

If $N\subseteq \mathbb{C}$ is a surface (that is $1$-dimensional Hermitian manifold), the holomorphic section curvature are equal to the Gauss curvature
of $N$, which is
\begin{equation*}
- \frac{\Delta  \log h(z)}{h(z)^2},
\end{equation*}
where the Hermitian form $H^N_z$ on  $N$ is represented as $H^N_z(u,v) = h(z)^2 u\overline{v}$, $z\in N$, $u,v\in\mathbb{C}$, $h(z)>0$.

The result stated in the above proposition   is a  generalization of the classical Schwarz--Pick lemma which says that a holomorphic function which maps
the unit disk $\mathbb {D} \subseteq\mathbb{C}$ into itself does  not  increase  the hyperbolic distance $\sigma $ on $\mathbb {D}$, i.e.,
\begin{equation*}
\sigma(f(z), f(w))\le\sigma(z,w),\quad  z,w\in \mathbb{D};
\end{equation*}
recall that the hyperbolic metric on  $\mathbb {D}$  is  given by the following  Hermitian form
\begin{equation*}
H^{\mathbb{D}}_z  (u,v)= \frac{4u\overline{v}} {(1-|z|^2)^2} ,\quad  z\in \mathbb {D},\, u, v\in \mathbb {C},
\end{equation*}
with  the Gauss curvature equal to $-1$.

Proposition \ref{PROP.YAU} is also generalization of  the Ahlfors result for surfaces with nonpositive Gauss curvature  \cite{AHLFORS.TAMS, AHLFORS.BOOK}.

\subsection{The main  results of this work}
Thought the rest of this paper let $J = (a,b)\subseteq\mathbb {R}$ be an interval, bounded or unbounded but not equal to $\mathbb {R}$. The weight on $J$
is any positive and  continuous function on this interval.      If $\omega$ is a weight on $J$, we define the $\omega$-distance between $a,b\in J$ in the
following way
\begin{equation*}
d_\omega(a,b) = \int_{a}^{b}\omega (t)dt.
\end{equation*}
For a   weight $\omega\in  \mathcal {C}^2 (J)$ we introduce  the following quantity
\begin{equation*}
k_\omega (t) = \frac{\omega'(t)^2-\omega(t)\omega''(t)}{\omega(t)^4},\quad t\in J.
\end{equation*}

The  first main result of this paper  is the distance decreasing property of the real part of a holomorphic function on the manifold $M$ with respect to
the distance on $M$ and the $\omega$-distance on $J = (a,b)$ provided that $k_\omega\le -1$. Therefore, if  $J$ is equipped  with a weight $\omega$ with
$k_\omega\le -1$ (of special interest for us  are weights    with  $k_\omega \equiv -1$  on $J$),  the  result says that  the real part of a holomorphic function  $f:M\to J$ is distance  decreasing, i.e., we have
\begin{equation*}
d_\omega (\mathrm {Re}f(z),\mathrm {Re}f(w))\le d_M (z,w),\quad z,w\in M.
\end{equation*}

This improves some recent results \cite{CHEN, KV.PAMS, MATELJEVIC.JMAA}  obtained for real--valued harmonic functions on proper simply connected domains
in  $\mathbb{C}$, since on a  such type domain a harmonic function is a real part of a holomorphic one. See also the  recent papers
\cite{MARKOVIC.ARCHM, MATELJEVIC.SANU, MATELJEVIC.SVETLIK.AADM}.

In the second part of the next section we  prove that the modulus of a bounded holomorphic function is distance decreasing with respect to the distance
on $M$  and  the hyperbolic distance $\sigma $  on  $\mathbb{D}$. More precisely,  this result says that
\begin{equation*}
\sigma ( |f|(z),|f|(w))\le d_M (z,w),\quad z,w\in M,
\end{equation*}
provided that  $f$ is bounded by $1$.

\section{Distance decreasing property  of the real part and the modulus}

\subsection{Distance decreasing  property  of the real part of a holomorphic function}
Our  first main result follow from the following theorem.

\begin{theorem}\label{TH.MAIN}
Let $f$ be a holomorphic function on $M$ such that $\mathrm {Re} f\in J$. Let $\omega$ be a weight on $J$ which satisfies $k_\omega \le -1$.      Then
$\mathrm {Re} f$ is  distance decreasing, i.e., we have
\begin{equation}\label{EQ.RE.DISTANCE.DECREASING}
d_\omega  (\mathrm {Re} f  (z), \mathrm {Re} f (w)) \le    d_M (z,w),\quad z,w\in M.
\end{equation}
\end{theorem}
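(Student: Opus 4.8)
The plan is to reduce the statement to an application of the Yau--Schwarz--Pick lemma (Proposition~\ref{PROP.YAU}) by realizing the interval $J$, equipped with the $\omega$-distance, as (the image of) a negatively curved Hermitian surface. Consider the strip $S = J \times \mathbb{R} \subseteq \mathbb{C}$ with coordinate $\zeta = s + it$, and put on it the Hermitian form
\begin{equation*}
H^S_\zeta(u,v) = \omega(s)^2\, u\overline{v},\quad \zeta = s+it\in S,\ u,v\in\mathbb{C}.
\end{equation*}
Since $\omega$ depends only on the real part $s$, this is a conformal metric with conformal factor $h(\zeta) = \omega(\operatorname{Re}\zeta)$, and its Gauss curvature is
\begin{equation*}
-\frac{\Delta \log h(\zeta)}{h(\zeta)^2} = -\frac{(\log\omega)''(s)}{\omega(s)^2} = \frac{\omega'(s)^2 - \omega(s)\omega''(s)}{\omega(s)^4} = k_\omega(s),
\end{equation*}
where I used $\Delta = \partial_s^2 + \partial_t^2$ and that $\log h$ is independent of $t$. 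Hence the hypothesis $k_\omega \le -1$ says precisely that $S$ has Gauss (= holomorphic sectional) curvature bounded above by $-1$.

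Next I would observe that the map $g = \Phi\circ f$, where $\Phi(\zeta) = \zeta$ viewed as a holomorphic map $M \to S$, is well-defined: $f$ is holomorphic on $M$ with $\operatorname{Re} f \in J$, so $f$ maps $M$ into the strip $S$ (there is no constraint on $\operatorname{Im} f$). By Proposition~\ref{PROP.YAU} applied with $N = S$ (curvatures bounded by $-1$ on both sides, after rescaling the Ricci bound of $M$ to $-1$ as in the hypothesis), $f$ is distance decreasing: $d_S(f(z), f(w)) \le d_M(z,w)$ for all $z,w\in M$. Finally I would relate $d_S$ to $d_\omega$: for any path $\gamma=(\gamma_1,\gamma_2):[a,b]\to S$ joining two points, its length is $\int_a^b \omega(\gamma_1(t))\sqrt{\gamma_1'(t)^2 + \gamma_2'(t)^2}\,dt \ge \int_a^b \omega(\gamma_1(t))|\gamma_1'(t)|\,dt \ge \bigl|\int_{\gamma_1(a)}^{\gamma_1(b)}\omega(s)\,ds\bigr| = d_\omega(\gamma_1(a),\gamma_1(b))$, where the last inequality uses that $\gamma_1$ is a path in $J$ between its endpoints and $\omega>0$. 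Taking the infimum over paths joining $f(z)$ and $f(w)$, and noting $\operatorname{Re} f(z) = \gamma_1(a)$, $\operatorname{Re} f(w) = \gamma_1(b)$ for the projected path, gives $d_\omega(\operatorname{Re} f(z), \operatorname{Re} f(w)) \le d_S(f(z), f(w)) \le d_M(z,w)$, which is \eqref{EQ.RE.DISTANCE.DECREASING}.

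The main technical point to be careful about is the applicability of Proposition~\ref{PROP.YAU}: it requires the target $N$ to be an honest Hermitian manifold with holomorphic sectional curvature bounded \emph{strictly} above by a negative constant, and $S$ with the form above is complete and has curvature $\le -1$, so this is fine — but one should double-check that completeness of $S$ (or at least the validity of Yau's argument) does not require completeness of the target; Yau's theorem as stated only needs $M$ complete, so no issue arises. A secondary subtlety is that the factor in Proposition~\ref{PROP.YAU} is $K_M/K_N = (-1)/(-1) = 1$ only after one normalizes the Ricci lower bound of $M$ to exactly $-1$; since the theorem hypothesis is stated under the standing assumption that $M$ has Ricci curvature bounded below by $-1$ (recorded right after Proposition~\ref{PROP.YAU}), this is exactly the setting we are in. I expect the only real work beyond this is the curvature computation for $H^S$, which is the routine one-line Laplacian calculation above.
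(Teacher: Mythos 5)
Your proposal is correct, and its first half is exactly the paper's: both equip the strip $N=J\times\mathbb{R}$ with the conformal metric $\omega(\mathrm{Re}\,\zeta)^2\,|d\zeta|^2$, compute that its Gauss curvature equals $k_\omega(\mathrm{Re}\,\zeta)\le -1$, and invoke Proposition~\ref{PROP.YAU} to get $d_N(f(z),f(w))\le d_M(z,w)$. Where you genuinely diverge is in how \eqref{EQ.RE.DISTANCE.DECREASING} is extracted from this. The paper first converts the distance-decreasing property into the pointwise gradient estimate \eqref{EQ.H}, using the limit $d_N(\zeta,\eta)/|\zeta-\eta|\to\tilde\omega(\zeta)$ as $\eta\to\zeta$, and then integrates $\omega(\mathrm{Re}f\circ\gamma)\,|(\mathrm{Re}f\circ\gamma)'|$ along arc-length parameterized paths $\gamma$ in $M$ (chart by chart), finally taking the infimum over $\gamma$. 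You instead prove that the projection $\zeta\mapsto\mathrm{Re}\,\zeta$ is itself a contraction from $(N,d_N)$ to $(J,d_\omega)$, by projecting paths in $N$ and using $\omega(\gamma_1)\sqrt{\gamma_1'^2+\gamma_2'^2}\ge\omega(\gamma_1)|\gamma_1'|\,$, and then compose the two contractions. Your route is shorter and more elementary: it avoids the differentiability/limit argument behind \eqref{EQ.H} and the path-in-$M$ bookkeeping, at the cost of not producing the infinitesimal Schwarz--Pick-type inequality \eqref{EQ.H}, which the paper obtains as a by-product. One small caveat: your parenthetical claim that $(N,H^N)$ is complete is neither needed nor true in general (for instance $\omega$ can be bounded on a bounded $J$ and still satisfy $k_\omega\le-1$, making the strip metric incomplete); as you correctly note, Yau's theorem requires completeness only of the source $M$, so this does not affect the argument.
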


\begin{proof}
Note that $f$ maps $M$ into the vertical strip domain $N = J\times\mathbb{R}\subseteq \mathbb{C}$. Introduce the Hermitian form on $N$ in the following
way
\begin{equation*}
H^N_z(u,v) = \tilde {\omega}(z)^2u\overline{v},\quad u,v\in \mathbb{C},\, \tilde {\omega}(z) = \omega (\mathrm {Re}\, z),\, z\in N.
\end{equation*}
Let $d_N$ be the distance on $N$.  According to our assumption, the Gauss curvature of $N$ is bounded  by $-1$ form above. Indeed, for $z\in N$ we have
\begin{equation*}
-\frac { \Delta \log \tilde {\omega}   (z)}{\tilde {\omega}^2(z)}=
\frac { {\omega} '(\mathrm {Re}\, z)^2 -   {\omega}(\mathrm {Re}\, z)   {\omega}''(\mathrm {Re}\, z)}
{{\omega}(\mathrm {Re}\, z)^4} = k_\omega (\mathrm {Re}\, z)  \le -1.
\end{equation*}

In view of Proposition \ref{PROP.YAU} the function  $f:M\to N$ is distance decreasing, which implies that
\begin{equation}\label{EQ.H}
\sup_{\zeta\in\mathbb{C}^m,\,  H_z ( \zeta, \zeta )=1}
\left| \left<\nabla f(z),{\zeta} \right> \right|  \le \frac {1  }  {\tilde {\omega}(f(z))},\quad z\in M
\end{equation}
in local coordinates $z_1,z_2,\dots, z_m$ around $z$, where $\nabla f=(f_{z_1},f_{z_2},\dots,f_{z_m})$ and $m=\dim M$. Indeed, let $\gamma:[a,b]\to M$
be a smooth path parameterized by arc length, and let $\gamma'(a ) = \overline {\zeta}$. Since $f:M\to N$  is distance decreasing, we have
\begin{equation*}
d_N ( f( \gamma (t) ), f ( \gamma (a) ) )\le d_M (\gamma (t), \gamma (a)) \le   \ell (\gamma|_{[a,t]}),\quad t\in [a,b].
\end{equation*}
If $\nabla f (z)\ne0$, then for $t$ sufficiently close to $a$ there holds  $f( \gamma (t))\ne f(\gamma (a) )$.  It follows that
\begin{equation*}
\frac {d_N  ( f(\gamma (t) ),  f( \gamma (a) ) ) }  {| f(\gamma (t) )- f( \gamma (a) ) |} \frac {| f(\gamma (t) ) - f( \gamma (a) ) |}  {t-a}
\frac{t-a}{ \ell (\gamma|_{[a,t]})}\le  1.
\end{equation*}
Having  in mind  that  $\lim_{\eta\to \zeta}\frac{d_N (\zeta,\eta)}{|\zeta-\eta|}=\tilde {\omega}(\zeta)$ (for this equality in a general setting, i.e.,
for vector spaces with norm,  we refer to  \cite{MARKOVIC.JGA}),   from the last inequality we obtain \eqref{EQ.H}.

Let now $z,w\in M$ be arbitrary,  and let $\gamma:[a,b]\to M$ be a path. Assume that $\gamma$ is  $\mathcal {C}^1$-smooth parameterized by arc length, so
that we have $\ell(\gamma) = b-a$, and  contained in a single chart of $M$ (in general case we should consider pieces of $\gamma$ and apply the following
rezoning on the each part of the path). Denote $\mathbf{t}_ \gamma(t) = \overline{\gamma'(t)}$. Then we have
$H_{\gamma(t)}(\mathbf{t}_ \gamma(t),\mathbf{t}_ \gamma(t)) =H_{\gamma(t)}(\gamma'(t),\gamma'(t))=1$.  Note that $\delta =  \mathrm {Re} f \circ  \gamma$
may be considered as a $\mathcal {C}^1$-smooth path in $J$ with endpoints  $\mathrm {Re} f (z)\in J$ and $ \mathrm {Re} f(w)\in  J$.     Therefore, using
\eqref{EQ.H} in the second inequality below, for the $\omega$-distance between $\mathrm{Re} f(z)$  and  $\mathrm{Re} f (w)$ we obtain
\begin{equation*}\begin{split}
d _\omega(  \mathrm{Re} f(z)  ,\mathrm{Re} f (w) )&   \le \int_a^b \omega (\delta (t))  |\delta'(t) |  dt
= \int_a^b \omega ( \mathrm{Re} f (\gamma(t) ) \left|\frac d{dt}  \mathrm{Re } f   (\gamma(t) ) \right |dt
\\&  = \int_a^b   \omega ( \mathrm{Re} f (\gamma(t) ) \left| \mathrm{Re}\frac d{dt} f (\gamma(t) ) \right|dt
\\&= \int_a^b \omega  ( \mathrm{Re}f (\gamma(t) )  \left|\mathrm{Re} \left< \nabla f ( \gamma (t) ),\mathbf{t}_{\gamma}(t)\right>\right| dt
\\&\le  \int_a^b  \omega  ( \mathrm{Re} f (\gamma(t) )  \left| \left< \nabla f ( \gamma (t) ), \mathbf{t}_{\gamma}(t)\right>\right|   dt
\\&\le  \int_a^b \frac { \omega (  \mathrm{Re}(     f ( \gamma(t) ) )} {\tilde {\omega} ( f (\gamma(t)))}dt = b-a =  \ell (\gamma),
\end{split}\end{equation*}
since  $\tilde {\omega} (z) = {\omega}(\mathrm {Re}\, z) $. If we take infimum over all $\gamma$, we obtain the distance decreasing  property of the real
part of $f$, i.e., \eqref{EQ.RE.DISTANCE.DECREASING}.
\end{proof}

\begin{remark}
Having  in mind the preceding theorem we should  find   positive   solutions of the  differential equation
\begin{equation*}
\frac {\omega '^2 - \omega\omega '' }{\omega  ^4}  = -k^2,\quad k\ge 1.
\end{equation*}

This differential equation may be rewritten as $\frac {(\log \omega)''}{\omega^2} = k^2$. If we introduce $\lambda = \log(2k^2\omega^2)$, then we  obtain
the following equation  $\lambda'' = e^{\lambda}$.  The last equation may be solved in the standard way.  One obtains the general solutions:
\begin{equation*}
e^{\lambda(t)}{\sin^2 (C_1 t+C_2)} =  {2C_1^2},\quad  e^{\lambda(t)}{\mathrm {sh}^2 (C_1 t+C_2)} =  {2C_1^2},\quad e^{\lambda(t)}(t+C)^2 =   2,
\end{equation*}
where $C_1$, $C_2$ and $C$ are  constants.

Therefore, the positive solutions of the equation  $\frac {\omega '^2 - \omega\omega '' }{\omega  ^4}  = -k^2$ on $J$ are:
\begin{itemize}
\item $\omega(t) = \frac{C_1}{k |\sin (C_1 t+C_2)|}$;
\item $\omega(t) = \frac{C_1}{k|\mathrm {sh} (C_1 t+C_2)|}$;
\item $\omega(t) = \frac 1 {k|t+C|}$,
\end{itemize}
\noindent where the constants $C_1>0$, $C_2$ and $C$ should  be  adjusted in a such way that the interval $J$ is contained in the  domain of $\omega$.
\end{remark}

An immediate consequence  of the preceding  theorem  and the   remark  is  the following theorem.

\begin{theorem}
(i) Let $f$ be a holomorphic function on the manifold  $M$ with the  real part   bounded by $1$. Then $\mathrm {Re} f$ is contractive with respect to
the distance  on  $M$ and the hyperbolic distance on $(-1,1)$ inhered from the domain $(-1,1)\times \mathbb{R}$.

(ii) Let $f$ be a holomorphic  function   on the manifold  $M$  such that    the real part of $f$ is positive. Then  $\mathrm {Re} f$ is contractive
with  respect to the  distance on $M$ and the hyperbolic distance   on $(0,\infty)$ inhered from $(0,\infty)\times \mathbb {R}$.
\end{theorem}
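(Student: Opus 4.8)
The plan is to derive both parts directly from Theorem \ref{TH.MAIN} by exhibiting explicit weights on the two intervals $(-1,1)$ and $(0,\infty)$ that satisfy $k_\omega \equiv -1$, and then identifying the resulting $\omega$-distances with the claimed hyperbolic distances. For part (i), take $J = (-1,1)$; the weight inherited on $(-1,1)$ from the strip $(-1,1)\times\mathbb{R}$ carrying the curvature $-1$ metric is, by the second family in the Remark (with $k=1$), $\omega(t) = \dfrac{C_1}{|\cos(C_1 t + C_2)|}$ after a phase shift, and choosing $C_1 = \pi/2$, $C_2 = 0$ gives a positive $\mathcal{C}^2$ weight on $(-1,1)$ with $k_\omega \equiv -1$; concretely I would just take $\omega(t) = \dfrac{\pi/2}{\cos(\pi t/2)}$. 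For part (ii), take $J = (0,\infty)$ and use the third family $\omega(t) = \dfrac{1}{t}$ (constant $C = 0$, $k = 1$), which visibly satisfies $\dfrac{\omega'(t)^2 - \omega(t)\omega''(t)}{\omega(t)^4} = \dfrac{t^{-4} - t^{-1}\cdot 2t^{-3}}{t^{-4}} = 1 - 2 = -1$, so $k_\omega \equiv -1$ on $(0,\infty)$.

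Next I would invoke Theorem \ref{TH.MAIN}: since $\mathrm{Re}\,f$ takes values in $J$ and the chosen $\omega$ satisfies $k_\omega \le -1$ (in fact $\equiv -1$), the theorem gives $d_\omega(\mathrm{Re}\,f(z), \mathrm{Re}\,f(w)) \le d_M(z,w)$ for all $z,w\in M$. It then remains to check that, with these specific weights, $d_\omega$ is exactly the hyperbolic distance on the respective interval induced as a totally geodesic slice of the strip. For part (ii), $d_\omega(a,b) = \left|\int_a^b \dfrac{dt}{t}\right| = |\log b - \log a|$, which is the standard hyperbolic distance on the half-line $(0,\infty)$ (the metric $|dt|/t$); I would note this is the restriction of the Poincaré metric of the right half-plane $(0,\infty)\times\mathbb{R}$ to the positive real axis. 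For part (i), $d_\omega(a,b) = \left|\int_a^b \dfrac{(\pi/2)\,dt}{\cos(\pi t/2)}\right|$, and the substitution $s = \sin(\pi t/2)$ turns this into $\left|\int \dfrac{ds}{1 - s^2}\right| = \dfrac12\left|\log\dfrac{1+s}{1-s}\right|$ evaluated between the two endpoints, which is precisely the hyperbolic distance on the diameter $(-1,1)$ of the strip $(-1,1)\times\mathbb{R}$ equipped with its constant curvature $-1$ metric (equivalently, after the biholomorphism with a half-plane or disk, the Poincaré distance restricted to that geodesic).

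The only genuinely substantive point — and the one I would present carefully rather than wave at — is the identification in the previous paragraph that the $\omega$-distance for the exhibited weight coincides with the hyperbolic distance "inherited from" the strip: one must confirm that the real axis (resp. positive real axis) is a geodesic of the strip metric, so that the induced length metric on the slice agrees with the ambient distance between points of the slice, and that the induced Riemannian metric on the slice is exactly $\omega(t)^2\,dt^2$ with the $\omega$ above. Since the strip metric is $\tilde\omega(z)^2|dz|^2$ with $\tilde\omega(z) = \omega(\mathrm{Re}\,z)$, its restriction to $\{\mathrm{Im}\,z = 0\}$ is $\omega(t)^2\,dt^2$, and the vertical-line-free geodesic through two real points is the real segment itself (by the standard symmetry argument: reflection in the real axis is an isometry fixing both endpoints, hence fixes the geodesic). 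This closes the loop, and both assertions (i) and (ii) follow.

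\begin{proof}
We apply Theorem \ref{TH.MAIN} with suitable explicit weights.

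(i) Let $J = (-1,1)$ and put $\omega(t) = \dfrac{\pi/2}{\cos(\pi t/2)}$, which is positive and of class $\mathcal{C}^2$ on $(-1,1)$. A direct computation (this is the second family in the Remark with $k = 1$, up to a phase shift) shows $k_\omega \equiv -1$ on $(-1,1)$. Since $\mathrm{Re}\,f$ maps $M$ into $(-1,1)$, Theorem \ref{TH.MAIN} yields
\begin{equation*}
d_\omega(\mathrm{Re}\,f(z), \mathrm{Re}\,f(w)) \le d_M(z,w),\quad z,w\in M.
\end{equation*}
It remains to identify $d_\omega$ with the hyperbolic distance on $(-1,1)$ inherited from $N = (-1,1)\times\mathbb{R}$. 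The Hermitian form on $N$ realizing Gauss curvature $-1$ is $H^N_z(u,v) = \tilde\omega(z)^2 u\overline v$ with $\tilde\omega(z) = \omega(\mathrm{Re}\,z)$; its restriction to the segment $\{\mathrm{Im}\,z = 0\} = (-1,1)$ is $\omega(t)^2\,dt^2$. Moreover reflection in the real axis is an isometry of $(N, H^N)$ fixing every pair of real points, so the real segment joining two such points is a geodesic of $N$; hence the distance it induces between points of $(-1,1)$ equals both $d_N$ restricted to $(-1,1)$ and the length metric $d_\omega$ of $\omega(t)^2\,dt^2$. Finally, for $a, b \in (-1,1)$, the substitution $s = \sin(\pi t/2)$ gives
\begin{equation*}
d_\omega(a,b) = \left| \int_a^b \frac{\pi/2}{\cos(\pi t/2)}\,dt \right| = \left| \int_{\sin(\pi a/2)}^{\sin(\pi b/2)} \frac{ds}{1-s^2} \right| = \frac12\left| \log\frac{(1+\sin(\pi b/2))(1-\sin(\pi a/2))}{(1-\sin(\pi b/2))(1+\sin(\pi a/2))} \right|,
\end{equation*}
which is precisely the hyperbolic distance on $(-1,1)$. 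Combining the last two displays proves (i).

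(ii) Let $J = (0,\infty)$ and put $\omega(t) = 1/t$, which is positive and $\mathcal{C}^2$ on $(0,\infty)$. Then $\omega'(t) = -1/t^2$, $\omega''(t) = 2/t^3$, so
\begin{equation*}
k_\omega(t) = \frac{\omega'(t)^2 - \omega(t)\omega''(t)}{\omega(t)^4} = \frac{t^{-4} - 2t^{-4}}{t^{-4}} = -1,\quad t\in(0,\infty).
\end{equation*}
Since $\mathrm{Re}\,f$ maps $M$ into $(0,\infty)$, Theorem \ref{TH.MAIN} gives $d_\omega(\mathrm{Re}\,f(z), \mathrm{Re}\,f(w)) \le d_M(z,w)$ for $z, w\in M$. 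As in (i), the metric $\tilde\omega(z)^2 u\overline v$ on $N = (0,\infty)\times\mathbb{R}$ with $\tilde\omega(z) = \omega(\mathrm{Re}\,z) = 1/\mathrm{Re}\,z$ has Gauss curvature $-1$, its restriction to the positive real axis is $t^{-2}\,dt^2$, and that axis is a geodesic of $N$ by the reflection symmetry in the real axis; hence the induced distance on $(0,\infty)$ equals $d_\omega$. For $a, b\in(0,\infty)$,
\begin{equation*}
d_\omega(a,b) = \left| \int_a^b \frac{dt}{t} \right| = |\log b - \log a|,
\end{equation*}
the hyperbolic distance on $(0,\infty)$ inherited from the half-plane $(0,\infty)\times\mathbb{R}$. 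This proves (ii).
\end{proof}
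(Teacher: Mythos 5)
Your proof is correct and follows essentially the same route as the paper: apply Theorem \ref{TH.MAIN} with the explicit weights $\omega(t)=\frac{\pi}{2}\sec(\frac{\pi t}{2})$ on $(-1,1)$ and $\omega(t)=\frac1t$ on $(0,\infty)$, which are exactly the solutions with $k_\omega\equiv-1$ used in the paper's proof (note they come from the first, sine family of the Remark, not the second, though your direct verification of $k_\omega\equiv-1$ makes this immaterial). Your additional check that the real segment is a geodesic of the strip metric, so that $d_\omega$ really is the ``inherited'' hyperbolic distance, is a point the paper leaves implicit and is a welcome bit of extra care rather than a divergence in method.
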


\begin{proof}
(i) The  hyperbolic metric on  $(-1,1)\times \mathbb {R}$ is given by the Hermitian form
\begin{equation*}
H_z (u,v) =\left(\frac \pi 2 \frac {1} {\cos ( \frac \pi 2 \mathrm{Re}) \, z}\right)^2 u\overline{v},
\quad z\in (-1,1)\times \mathbb {R},\,  u,v\in\mathbb{C}.
\end{equation*}
The corresponding weight on $(-1,1)$ is $\omega (t) = \frac {\pi}2 \frac{1}{\cos (\frac {\pi t} 2 )}$ which is the first solution given in the preceding
remark  for $k=1$,  $C_1=\frac\pi2$, and  $C_2=-\frac\pi2$.

(ii) The hyperbolic metric on the  half--plane $\{z\in\mathbb {C}:\mathrm {Re}\, z>0\}$  is
\begin{equation*}
H_z (u,v) = \frac {1} {( \mathrm {Re}\, z)^2 }  u\overline{v},\quad  u,v\in\mathbb{C},
\end{equation*}
so the  weight on $J= (0,\infty)$ is  given by $\omega (t)  =  \frac 1t$, which  is the third solution for $k=1$ and  $C =0$.
\end{proof}

\begin{remark}
We would like here to compare the result  given in  above theorem  with some recent results for real--valued harmonic functions.     Let us replace the
manifold $M$ with the  unit disk $\mathbb {D}$ equipped with the hyperbolic metric (with the Gauss curvature equal to $-1$). Then the first part of the
above  theorem  recovers the result obtained  by Chen \cite{CHEN, PAVLOVIC.BOOK}.

Note that the weight $\omega(t)=\frac {\pi}2 \frac{1}{\cos (\frac {\pi t} 2 )}$ on $(-1,1)$ is comparable  to the  following one $\tilde {\omega} (t) =
\frac 2{1-t^2}$ on the same interval in the sense that $\frac \pi4 \tilde{\omega} \le \omega$ on $(-1,1)$. Indeed, this inequality is elementary and it
may be found in  \cite{CHEN} or \cite{MATELJEVIC.JMAA}. This means that the distance $d_\omega (t,t')$ is greater then
$\frac \pi 4 d_{\tilde{\omega}} (t,t')$ for $t, t'\in (-1,1)$. Thus  we have  just recovered the result of Kalaj and Vuorinen \cite{KV.PAMS} which says
that for a harmonic function   $U:\mathbb{D}\to (-1,1)$ there holds
\begin{equation*}
\sigma (U (z), U(w))\le \frac 4\pi \sigma(z,w),\quad   z,w \in \mathbb {D},
\end{equation*}
since $\tilde{\omega}$-distance on $(-1,1)$ is equal to the distance on  $(-1,1)$ inhered from the hyperbolic distance $\sigma$ on $\mathbb {D}$.  This
improves also \cite{KALAJ.GLASGOW}.

On the other hand, the part (ii)  gives the  result of Markovi\'{c} \cite{MARKOVIC.INDAG} for positive harmonic functions on the unit disk. See also
\cite{MELENTIJEVIC.AASF}.
\end{remark}

\subsection{Distance decreasing  property   of the modulus of a holomorphic function}
Having in mind the preceding theorem  on the contraction  of the real part of a holomorphic function, one may pose the natural question what  can be said
about the modulus $|f|$ of a bounded holomorphic function $f$ on the manifold $M$. In the rest of this paper we will consider the question concerning the
the distance contraction of  the modulus of a bounded  holomorphic  function on the manifold $M$ with respect to the distance on $M$ and the hyperbolic
distance on  $\mathbb{D}$.

This question is also motivated by the Pavlovi\'{c} work \cite{PAVLOVIC.PAMS} on the Schwarz lemma for the modulus of a holomorphic mapping   on the disk
$\mathbb{D}$ with values in the unit ball $\mathbb {B}^n$.  Note that $|f|$ is not necessary (real) differentiable on $\mathbb {D}$, so the Schwarz lemma
for the modulus of a holomorphic function  has to be  formulated in the  following way
\begin{equation}\label{EQ.MODULUS}
\nabla^\ast  |f| (z) \le \frac{1-|f(z)|^2}{1-|z|^2},\quad z\in \mathbb {D},
\end{equation}
where  for a real--valued  function  $g$  on $\mathbb{D}$ we have denoted
\begin{equation*}
\nabla^\ast g(z)  =   \limsup _{h\to 0 }\frac {|g(z+h)|}{|h|}.
\end{equation*}
If $f(z)\ne 0$, then $|f|$ is differentiable  at $z\in \mathbb {D}$, and  we may replace  $\nabla^\ast |f| (z)$ with the modulus of the ordinary gradient
$|\nabla   |f| (z)|$.

The Pavlovi\'{c} result is stated and proved in the form we have just mentioned.       The lemma is used to extend some results of K. Dyakonov on modulus
of continuity of holomorphic functions on the disk. However, it may be rewritten in the  manner which does not involve  any kind of derivative  by saying
that  $|f|$  is contractive in the  hyperbolic distance $\sigma$ on $\mathbb {D}$, i.e.,
\begin{equation*}
\sigma  (|f| (z), |f| (w)) \le \sigma (z,w),\quad z,w\in \mathbb {D}.
\end{equation*}
Indeed, this follows since \eqref{EQ.MODULUS} may be rewritten as
\begin{equation*}
\frac {2 \nabla^\ast  |f| (z) }{1-|f(z)|^2}\le\frac{2}{1-|z|^2}
\end{equation*}
(see \cite{MARKOVIC.JGA} for a similar result in a general setting, i.e.,              for Fr\'{e}chet differentiable mappings between vector spaces with
norm).  In the sequel we will   consider a generalization of the Pavlovi\'{c} result  for   holomorphic functions  on the  manifold $M$.

Recall that the  pseudo--hyperbolic distance $\rho $ on  $\mathbb {D}$   is given by
\begin{equation*}
\rho (z,w) = |\varphi _z (w)|,\quad z,w\in \mathbb {D},
\end{equation*}
where  $\varphi_z (w)= \frac{z-w}{1-\overline {z}w}$, is a  conformal transformation of  $\mathbb {D}$ onto itself.

Duren and Weir \cite{DUREN.TAMS}       showed that the pseudo-hyperbolic distance (even on the unit ball $\mathbb{B}^n$ in $\mathbb{C}^n$) satisfies the
inequality
\begin{equation}\label{EQ.ABS.RHO}
\rho (|z|,|w|)\le  \rho (z,w),\quad z,w\in \mathbb {D}.
\end{equation}
They gave an interesting geometric  proof (in any dimension). For the sake of completeness we will prove this inequality directly having in mind a  well
known identity
\begin{equation*}
1-|\varphi_z (w)| = \frac {(1-|z|^2) (1-|w|^2)}{|1-\overline{z}w|},\quad z,w\in\mathbb{D}
\end{equation*}
(this proof may be adapted for the unit ball). Using the simple  inequality  $|1-\overline{z}w|\ge  |1- |\overline{z}w | | = 1-|z||w|$,  we obtain
\eqref{EQ.ABS.RHO}.

Since  $t\to \log \frac{1+t}{1-t }$   is an increasing function on $(-1,1)$, and since
\begin{equation*}
\sigma    (z,w ) =  \log  \frac{1+\rho (z,w)}{1-\rho (z,w)},
\end{equation*}
we may conclude that the hyperbolic   distance on the  unit disk   satisfies  the inequality
\begin{equation}\label{EQ.ABS.HYPERBOLIC}
\sigma (|z|,|w|)\le \sigma  (z,w), \quad z,w\in \mathbb{D}.
\end{equation}

\begin{theorem}
Let $f$ be a  holomorphic function on the manifold   $M$  such that $|f (z)|<1$, $z\in M$. Then   we have
\begin{equation*}
\sigma  (|f| (z),|f| (w))\le d_M (z,w),\quad    z, w \in M.
\end{equation*}
In other words,   the  modulus  of $f$ is  contractive with respect  to the distance  on   $M$ and the hyperbolic distance on  $\mathbb {D}$.
\end{theorem}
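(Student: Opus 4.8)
The plan is to reduce the modulus statement to the real-part theorem already established. The key observation is that if $f:M\to\mathbb{D}$ is holomorphic, then for each fixed $z_0\in M$ we may postcompose with a M\"obius automorphism $\varphi_{f(z_0)}$ of $\mathbb{D}$ to arrange that the value at $z_0$ is $0$, and then take a holomorphic branch of a suitable function that converts the unit disk into a half-plane or strip. More precisely, I would first reduce to the case $f(z_0)=0$ for the point where I want the infinitesimal estimate, using that $\sigma$ is M\"obius-invariant on $\mathbb{D}$ and that \eqref{EQ.ABS.HYPERBOLIC} controls $\sigma(|z|,|w|)$ by $\sigma(z,w)$; the map $w\mapsto\varphi_{f(z_0)}(w)$ composed with $f$ is still a holomorphic function $M\to\mathbb{D}$.

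The heart of the argument is the following inequality, valid for any holomorphic $g:M\to\mathbb{D}$: the ``radial'' derivative of $|g|$ is controlled by $d_M$ in the hyperbolic metric of $\mathbb{D}$, i.e.
\begin{equation*}
\frac{2\,\nabla^\ast |g|(z)}{1-|g(z)|^2}\le 1\cdot\frac{1}{1}\quad\text{(infinitesimally, against the arc-length metric on }M\text{)}.
\end{equation*}
To obtain this I would proceed exactly as in the proof of Theorem \ref{TH.MAIN}: by Proposition \ref{PROP.YAU}, since $\mathbb{D}$ with its hyperbolic metric has holomorphic sectional curvature $-1$ and the Ricci curvature of $M$ is bounded below by $-1$, the map $g$ is distance decreasing, $\sigma(g(z),g(w))\le d_M(z,w)$. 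Then, along a $\mathcal{C}^1$ path $\gamma$ on $M$ parameterized by arc length, I estimate
\begin{equation*}
\frac{d}{dt}\,\sigma\big(g(\gamma(a)),g(\gamma(t))\big)\Big|_{t=a}
=\frac{2\,|\langle\nabla g(\gamma(a)),\mathbf{t}_\gamma(a)\rangle|}{1-|g(\gamma(a))|^2},
\end{equation*}
which must be $\le 1$; the point is that $|\mathrm{Re}\,\langle\nabla g,\mathbf{t}_\gamma\rangle|$ and hence $\big|\tfrac{d}{dt}|g\circ\gamma|\big|$ is bounded by the full modulus $|\langle\nabla g,\mathbf{t}_\gamma\rangle|$, so the radial derivative of $|g|$ inherits the same bound. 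Integrating along $\gamma$ and taking the infimum over paths, using the chain-rule identity $\sigma(|g|(z),|g|(w))=\int_a^b \tfrac{2}{1-|g|(\gamma(t))^2}\big|\tfrac{d}{dt}|g|(\gamma(t))\big|\,dt$ as the first inequality, yields $\sigma(|g|(z),|g|(w))\le d_M(z,w)$ directly, since the integrand is bounded pointwise by $H_{\gamma(t)}(\gamma'(t),\gamma'(t))^{1/2}=1$.

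Actually the cleanest route avoids the M\"obius reduction altogether: one simply composes Yau's distance-decreasing inequality $\sigma(f(z),f(w))\le d_M(z,w)$ with the elementary fact \eqref{EQ.ABS.HYPERBOLIC} that $\sigma(|\zeta|,|\eta|)\le\sigma(\zeta,\eta)$ on $\mathbb{D}$, giving
\begin{equation*}
\sigma(|f|(z),|f|(w))=\sigma\big(|f(z)|,|f(w)|\big)\le\sigma(f(z),f(w))\le d_M(z,w).
\end{equation*}
So the whole theorem follows by chaining Proposition \ref{PROP.YAU} (applied with $N=\mathbb{D}$, whose holomorphic sectional curvature is $-1$) with inequality \eqref{EQ.ABS.HYPERBOLIC}, which was proved just above the statement. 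The main thing to be careful about is the first equality $|f|(z)=|f(z)|$ and that the quantities in \eqref{EQ.ABS.HYPERBOLIC} are applied to the points $f(z),f(w)\in\mathbb{D}$; there is no real obstacle, the argument is a two-line composition, and the infinitesimal/derivative reformulation via $\nabla^\ast$ is then an optional corollary obtained by differentiating at coincident points as in the Pavlovi\'{c} discussion.
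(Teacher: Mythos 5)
Your final two-line argument --- chaining Yau's distance-decreasing property $\sigma(f(z),f(w))\le d_M(z,w)$ with the inequality \eqref{EQ.ABS.HYPERBOLIC} --- is exactly the paper's own proof, so your proposal is correct and takes essentially the same approach. The preliminary detour through M\"obius normalization and the infinitesimal $\nabla^\ast$ estimate is unnecessary, as you yourself note.
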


\begin{proof}
Using \eqref{EQ.ABS.HYPERBOLIC} we obtain
\begin{equation*}
\sigma   (|f|  (z), |f| (w))  \le  \sigma   ( f(z), f (w))\le   d_M(z,w),\quad z, w \in M,
\end{equation*}
which we aimed to prove.
\end{proof}

\begin{remark}
That the modulus of a holomorphic mapping on $M$ with values in the unit ball $\mathbb {B}^n\subseteq \mathbb{C}^n$ is contractive (the above  mentioned
Pavlovi\'{c} result) one may see in the same way as in the proof of the preceding  theorem.     Recall that the  Bergman metric on  $\mathbb {B}^n$ is a
Hermitian form  given by
\begin{equation*}
H_z(u,v) =2  \frac{(1-|z|^2)\left<u,\overline{v}\right>+\left<u,\overline{z}\right> \left<v,\overline{z}\right>}{(1-|z|^2)^2},
\quad z\in \mathbb {B}^n,\,   u,v\in \mathbb {C}^n.
\end{equation*}
The  holomorphic  sectional curvature of the Bergman  metric is constant and equal to $-1$.
Therefore, by  the Yau theorem any  holomorphic mapping $f:M\to\mathbb {B}^n$ is distance decreasing. On the other hand, the explicit form of the Bergman
distance on  $\mathbb{B}^n$ is
\begin{equation*}
\beta (z,w) =  \log  \frac{1+|\varphi _z (w)|}{1- |\varphi _z (w)|}= \log  \frac{1+\rho (z,w)}{1-\rho (z,w)},\quad z, w\in\mathbb{B}^n,
\end{equation*}
where
\begin{equation*}
\varphi_z (w)= \frac
{ z- \frac{\left<w,z\right>}{|z|^2} - \sqrt{1-|z|^2} \left( w - \frac{\left<w,z\right>}{|z|^2}\right) }
{1-\left<w,z\right>}
\end{equation*}
is a bi-holomorphic transformation of  $\mathbb{B}^n$ onto itself, and $\rho (z,w)=|\varphi _z (w)|$ is the pseudo--hyperbolic distance on $\mathbb {B}^n$.

For the unit disk $\mathbb {D}\subseteq \mathbb {B}^n$ the restricted  Bergman distance $\beta |_{\mathbb{D}}$  coincides with  the    hyperbolic distance
$\sigma$ on $\mathbb{D}$. Therefore, from  the Yau theorem and the Duren -- Weir result
\begin{equation*}
\beta  (|z|,|w |)\le \beta (z,w),\quad   z, w\in  \mathbb {B}^n,
\end{equation*}
we may deduce the Schwarz lemma for the modulus od a  vector--valued  holomorphic mapping on $M$. In particular, if $M$ is the unit disc $\mathbb{D}$
equipped with the hyperbolic metric, we have the Pavlovi\'{c} result for holomorphic mappings of $\mathbb {D}$ into $\mathbb {B}^n$ in the derivative--free
formulation.
\end{remark}


\begin{thebibliography}{10}

\bibitem{AHLFORS.TAMS}
L.V. Ahlfors,  \textit{An extension of Schwarz's Lemma}, Trans.  Amer. Math.  Soc.  \textbf{43} (1938), 359--364.

\bibitem{AHLFORS.BOOK}
L.V. Ahlfors, \textit{Conformal invariants}, McGraw-Hill, New York,   1973.

\bibitem{BERGMAN}
St. Bergman,  The Kernel Function and Conformal Mapping, AMS, 1950.

\bibitem{CHEN}
H. Chen,  \textit{The Schwarz--Pick lemma and Julia lemma for real planar harmonic mappings}, Sci. China Math. \textbf{56} (2013), 2327--2334.

\bibitem{DUREN.TAMS}
P. Duren and R. Weir, \textit{The pseudohyperbolic metric and Bergman spaces in the ball},
Trans. Amer. Math. Soc. \textbf{359} (2007), 63--76.

\bibitem{KALAJ.GLASGOW}
D. Kalaj, \textit{Schwarz lemma for holomorphic mappings in the unit ball},
Glasgow Math. J. \textbf{60} (2018), 219--224.

\bibitem{KV.PAMS}
D. Kalaj and M. Vuorinen, \textit{On harmonic functions and the Schwarz lemma}, Proc.  Amer. Math. Soc.  \textbf{140} (2012), 161--165.

\bibitem{KOBAYASHI.BOOK}
Sh. Kobayashi,  {Hyperbolic Manifolds  and  Holomorphic Mappings}, World Scientific, Singapore, 2005.

\bibitem{MARKOVIC.ARCHM}
M. Markovi\'{c}, \textit{Lipschitz constants for the real part and modulus of analytic mappings on a negatively curved surface},
Arch. Math. \textbf{116} (2021), 61--66.

\bibitem{MARKOVIC.JGA}
M. Markovi\'{c}, \textit{Representations for the Bloch type semi-norm of Fr\'{e}chet differentiable mappings},
J. Geom. Anal. \textbf{31} (2021), 7947--7967.

\bibitem{MARKOVIC.INDAG}
M. Markovi\'{c}, \textit{On harmonic functions and the hyperbolic metric},
Indag. Math. \textbf {26} (2015),  19--23.

\bibitem{MATELJEVIC.SANU}
M. Mateljevi\'{c}, \textit{The Ahlfors--Schwarz lemma, curvature, distance and distortion},
Bulletin, Classe des Sciences Mathématiques et Naturelles, Sciences mathématiques \textbf{45} (2020),  67--119.

\bibitem{MATELJEVIC.JMAA}
M. Mateljevi\'{c}, \textit{Schwarz lemma and Kobayashi metrics for harmonic and holomorphic functions}, J. Math. Anal. App. \textbf{464} (2018), 78--100.

\bibitem{MATELJEVIC.SVETLIK.AADM}
M. Mateljevi\'{c} and M. Svetlik, \textit{Hyperbolic metric on the strip and the Schwarz lemma for HQR mappings},
Appl. Anal. Discrete Math. \textbf{14} (2020), 150--168.

\bibitem{MELENTIJEVIC.AASF}
P. Melentijevic, \textit{Invariant gradient in refinements of Schwarz lemma and Harnack inequalities},
Ann. Acad. Sci. Fenn. Math. \textbf{43} (2018), 391--399.

\bibitem{PAVLOVIC.PAMS}
M. Pavlovi\'{c}, \textit{A Schwarz lemma for the modulus of a vector--valued analytic function},
Proc. Amer. Math. Soc. \textbf{139} (2011) 969--973.

\bibitem{PAVLOVIC.BOOK}
M. Pavlovi\'{c}, Function Classes on the Unit Disc, an Introduction (scnd. ed.), de  Gruyter Stud. Math.,  de Gruyter, Berlin, 2019.

\bibitem{ROYDEN}
H.L. Royden, \textit{The Ahlfors--Schwarz lemma in several complex variables}, Comment.  Math.  Helv. \textbf{55} (1980),  547--558.

\bibitem{YAU.AJM}
Sh.-T. Yau, \textit{A general Schwarz Lemma for K\"{a}hler Manifolds}, Amer. J. Math. \textbf{100} (1978), 197--203.

\end{thebibliography}
\end{document}